\documentclass[twoside, reqno]{amsart} 
\usepackage{xcolor}
\usepackage{amssymb} 
\usepackage{amsthm}
\usepackage{amsmath}
\usepackage{mathtools} 
\usepackage{epigraph}
\usepackage{microtype}
\usepackage{pgfplots}
\pgfplotsset{compat=1.18}

\usepackage{hyperref} 
\definecolor{bluey}{rgb}{0,0,0.6}
\hypersetup{colorlinks,
linkcolor=bluey,
citecolor=bluey,
urlcolor=bluey}

\newtheorem{theorem}{Theorem}
\newtheorem{lemma}{Lemma}

\makeatletter
\def\thmhead@plain#1#2#3{%
  \thmname{#1}\thmnumber{\@ifnotempty{#1}{ }\@upn{#2}}%
  \thmnote{ \the\thm@notefont(#3)}}
\let\thmhead\thmhead@plain
\makeatother

\begin{document}
\title[3AP-free permutations have no exponential growth rate]%
{3AP-free permutations have \\ no exponential growth rate}
\author{Boon Suan Ho}
\address{Department of Mathematics, National University of Singapore}
\email{\href{mailto:hbs@u.nus.edu}{hbs@u.nus.edu}}

\begin{abstract} 
Let $\theta(n)$ be the number of permutations of $\{1,\dots,n\}$ with no
$3$-term arithmetic progressions.
We prove that 
$\lim_{n\to\infty}\theta(n)^{1/n}$ does not exist.
\end{abstract}

\maketitle

\section{Introduction}
A permutation $(\pi_1,\dots,\pi_n)$ of $\{1,\dots,n\}\eqqcolon[n]$ 
is said to be \emph{$3$AP-free} if no indices $i<j<k$ satisfying
$\pi_i+\pi_k=2\pi_j$ exist. We write $\theta(n)$ for the number of
$3$AP-free permutations of $[n]$.
The study of $3$AP-free permutations of~$[n]$
dates back to at least 1973, when Entringer and Jackson \cite{EJ73}
asked if all permutations of $[n]$ contain a $3$AP 
in the \textsl{American Mathematical Monthly} problems section.
In the solutions, Simmons \cite{Sim75}
gave values of $\theta(n)$ for $1\le n\le20$ (some incorrect);
the initial values are $(\theta(1),\theta(2),\dots)
=(1, 2, 4, 10, 20, 48, 104, 282, 496, 1066, 2460, \dots)$.

Shortly after, Davis, Entringer, Graham, and Simmons \cite{DEGS77}
investigated $3$AP-free permutations of $[n]$,
as well as $k$AP-free permutations of $\mathbf N$ and $\mathbf Z$.
They proved the relations 
\begin{equation*}
\theta(2n)\ge2\theta(n)^2 
\quad\text{and}\quad
\theta(2n+1)\ge\theta(n+1)\theta(n)
\quad\text{for $n\ge1$,}
\end{equation*} 
as well as the bounds 
\begin{equation*}
2^{n-1}\le\theta(n)\le\Bigl\lfloor{n+1\over2}\Bigr\rfloor!\,
\Bigl\lceil{n+1\over2}\Bigr\rceil!
\quad\text{for $n\ge1$;}
\end{equation*}
they also showed that
$\theta(2^k)>{1\over2}c^{2^k}$ for $k\ge4$ where $c=(2\theta(16))^{1/16}\approx2.248$,
giving an improved exponential lower bound for $\theta(n)$ along powers of two.

After some time, these bounds were improved by
Sharma \cite{Sha09}, who proved the relation 
\begin{equation*}
\theta(n)\le21
\theta\Bigl(\Bigl\lfloor{n\over2}\Bigr\rfloor\Bigr)\,
\theta\Bigl(\Bigl\lceil{n\over2}\Bigr\rceil\Bigr)
\quad\text{for $n\ge3$,}
\end{equation*}
as well as the exponential lower and upper bounds
\begin{equation*}  
{n2^n\over10}\le\theta(n)\le{2.7^n\over21}
\quad\text{for $n\ge11$.}
\end{equation*}
LeSaulnier and Vijay \cite{LV11} then
improved the lower bound to $\theta(n)\ge{1\over2}c^n$ for $n\ge8$, 
where $c=(2\theta(10))^{1/10}\approx2.152$.

Nonetheless, much remains mysterious about $\theta(n)$; 
for example, it is not known whether
$\theta(n)\le\theta(n+1)$ for all $n$, despite this appearing rather obvious 
for small values of $n$. 
In 2004, Landman and Robertson \cite{LR04} asked in their book 
\textsl{Ramsey Theory on the Integers} if $\lim_{n\to\infty}\theta(n)^{1/n}$ exists.
Later in 2017, Correll and Ho \cite{CH17} presented a dynamic programming algorithm 
that enabled them to compute values of $\theta(n)$ up to $n=90$. Since then,
values of $\theta(n)$ up to $n=200$ have been computed by Alois Heinz 
(see OEIS \href{https://oeis.org/A003407}{A003407}). These values yield the estimates
\begin{align*}
\bigl(2\theta(160)\bigr)^{1/160}\approx2.20499 
&<\liminf_{n\to\infty}\theta(n)^{1/n} \\
&\le\limsup_{n\to\infty}\theta(n)^{1/n}<2.32721
\approx\bigl(21\theta(128)\bigr)^{1/128}.
\end{align*} 
This may seem to suggest that the limit converges, yet we have the following result:

\begin{theorem} \label{theorem:1}
$\lim_{n\to\infty}\theta(n)^{1/n}$
does not exist.
\end{theorem}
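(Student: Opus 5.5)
The plan is to exhibit two residue-type families of integers, of the form $n=m\cdot 2^k$ with $m$ fixed and $k\to\infty$, along which $\theta(n)^{1/n}$ converges to two \emph{different} values. The mechanism is the parity splitting behind the Davis--Entringer--Graham--Simmons and Sharma relations. A $3$AP-free permutation of $[n]$ essentially consists of a $3$AP-free arrangement of the evens and one of the odds, placed one block after the other. Each block is an affine image of a $3$AP-free permutation of $[\lfloor n/2\rfloor]$ or $[\lceil n/2\rceil]$. Iterating this $k$ times, $\log\theta(m2^k)$ should be $2^k\log\theta(m)$ plus a correction that depends only on how much is lost or gained at each level. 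So I expect $\lim_k \theta(m2^k)^{1/(m2^k)}$ to exist and to equal some constant $\lambda(m)$ depending genuinely on $m$. The task is then to find $m_1,m_2$ with $\lambda(m_1)\neq\lambda(m_2)$.

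\textbf{Step 1 (lower bound along a family).} Iterating $\theta(2n)\ge2\theta(n)^2$ gives $\theta(m2^k)\ge 2^{2^k-1}\theta(m)^{2^k}$. Hence
\[
\liminf_{k\to\infty}\theta(m2^k)^{1/(m2^k)}\ge\bigl(2\theta(m)\bigr)^{1/m}.
\]

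\textbf{Step 2 (sharp upper bound).} Sharma's constant $21$ is far too lossy: with it, the upper estimate $(21\theta(m))^{1/m}$ never drops below the lower estimates from the available data. So the core of the argument will be a refined structural lemma. In a $3$AP-free permutation of $[n]$, all but a bounded number $C$ of entries are arranged with one parity class entirely preceding the other. The intended argument is the standard one: if an even entry $a$ sits between two odd entries $b,c$, or vice versa, this forces strong constraints, since $b+c$ is even and its midpoint would have to avoid a specific position. Iterating such constraints should confine the ``mixed'' entries to the ends.

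From this lemma I will derive an upper relation of the shape
\[
\theta(n)\le 2\theta(\lfloor n/2\rfloor)\theta(\lceil n/2\rceil)\bigl(1+\varepsilon(n)\bigr),
\]
with $\sum_k\varepsilon(m2^k)$ convergent, for instance $\varepsilon(n)=O(n^{C}\theta(n/2-C)/\theta(n/2))$, which decays exponentially. Iterating then gives
\[
\limsup_k\theta(m2^k)^{1/(m2^k)}\le\bigl(2\theta(m)(1+\delta_m)\bigr)^{1/m},
\]
where $\delta_m\to0$ is an explicitly bounded quantity. Combined with Step 1, this yields $\lambda(m)$ up to a factor $(1+\delta_m)^{1/m}$.

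\textbf{Step 3 (numerics).} Using the tabulated values of $\theta(n)$ for $n\le200$, I will locate $m_1,m_2$ for which
\[
\bigl(2\theta(m_1)(1+\delta_{m_1})\bigr)^{1/m_1}<\bigl(2\theta(m_2)\bigr)^{1/m_2}.
\]
The table's irregularities, such as the jump $\theta(8)=282$ against $\theta(9)=496$, suggest that comparing a power of two with a nearby odd multiplier, or with $3\cdot2^j$, will give a gap. This then shows that $\liminf_{n}\theta(n)^{1/n}<\limsup_{n}\theta(n)^{1/n}$, proving Theorem~\ref{theorem:1}.

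The main obstacle is Step 2: proving the structural lemma with an error term small enough that $\delta_m$ is negligible at the values of $m$ where the table shows a gap. If a uniform $O(1)$ exceptional set is too hard to obtain, I would first try a weaker statement that still suffices for the iteration, namely that the number of mixed entries is $O(\log n)$.
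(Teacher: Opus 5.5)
The gap is Step~2, and it rests on a false premise. Sharma's constant $21$ is \emph{not} too lossy. Iterating $\theta(2k)\le21\theta(k)^2$ gives $\theta(m2^k)^{1/(m2^k)}\le(21\theta(m))^{1/m}$ for every $k$. At $m=75$ this is $(21\theta(75))^{1/75}\approx2.27704$. Your own Step~1 at $m=64$ gives $\liminf_k\theta(64\cdot2^k)^{1/(64\cdot2^k)}\ge(2\theta(64))^{1/64}\approx2.27953$. Together these give $\liminf_n\theta(n)^{1/n}<\limsup_n\theta(n)^{1/n}$ at once.

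This is precisely the paper's proof. The paper also notes that each sequence $\theta(m2^t)^{1/(m2^t)}$ is increasing and bounded, so it converges to some $L_m$ with $(2\theta(m2^t))^{1/(m2^t)}\le L_m\le(21\theta(m2^t))^{1/(m2^t)}$ for all $t$; it then separates $L_1$ from $L_{75}$. What you missed is that the lower bound should be read off at a large member of one dyadic family and the upper bound at a comparably large member of another. Then the factor $21$ only costs $21^{1/75}$. The same idea gives $(21\theta(162))^{1/162}\approx2.2376<2.2848\approx(2\theta(128))^{1/128}$.

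The refinement you propose instead is unproven, since the structural lemma is only a heuristic. Its intended consequence, $\varepsilon(n)\to0$, is also not borne out at any size we can check. From the paper's values, $\theta(128)/(2\theta(64)^2)=\bigl((2\theta(128))^{1/128}/(2\theta(64))^{1/64}\bigr)^{128}\approx1.35$. Likewise $\theta(2k)/\theta(k)^2$ equals $2.5$, $3$, $2.82$, $2.67$ for $k=2,3,4,5$.

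Nor could a bounded-mixing lemma produce $\varepsilon(n)\to0$, because few mixed \emph{entries} does not mean few mixed \emph{permutations}. A $3$AP-free permutation of $[n]$ is determined by three things: its odd part, its even part, and the interleaving. The odd and even parts are each an unrestricted $3$AP-free arrangement, and mixing only adds admissible interleavings. For example, start from the odds-then-evens arrangement and move the last odd entry $o$ to just after the first even entry $e$. The only progressions this can create are $(2e-o,e,o)$ and $(e,o,2o-e)$, so the move is admissible whenever $o>2n/3$ and $e<n/3$. That is a single mixed entry, governed by a condition on just two entries, and nothing makes such pairs rare.

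So your estimate $\varepsilon(n)=O(n^C\theta(n/2-C)/\theta(n/2))$ has no basis: the odd and even parts are not shrunk to size $n/2-C$. It follows that $\delta_m$ need not be small. Step~3 also names no $m_1,m_2$ and checks no numbers. As written, the proposal does not prove the theorem---although your Step~1 plus the trivial iteration of Sharma's bound would.
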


\begin{center}\vspace{0.5cm}\hspace*{-0.6cm} 
\begin{tikzpicture}
\begin{axis}[
  width=13cm,
  height=5cm,
  axis lines=box,
  xlabel={$n$},
  ylabel={$\theta(n)^{1/n}$},
  xmin=1, xmax=200,
  ymin=1.0, ymax=2.30,
  tick align=inside,
  tick style={black},
  xtick pos=bottom,
  ytick pos=left,
  major tick length=0pt,
  xmajorgrids=false,
  ymajorgrids=false,
]
\addplot[black, line width=1.2pt, mark=none]
  table [x index=0, y index=1, col sep=space] {theta_root_n1_200.dat};
\end{axis}
\end{tikzpicture}
\end{center}

\section{The proof}
The idea is to consider subsequences of $(\theta(n)^{1/n})_{n\ge1}$
of the form 
\begin{equation}
\bigl(b_t(m)\bigr)_{t\ge0}\coloneqq\bigl(\theta(m2^t)^{1/(m2^t)}\bigr)_{t\ge0}.
\end{equation}
We will show that these subsequences converge for each $m\ge1$, 
then we will pick two of these subsequences and use known values of $\theta(n)$
to prove that they converge to different limits. First, we need two lemmas:

\begin{lemma}[\cite{DEGS77}]
We have $2\theta(k)^2\le\theta(2k)$ for $k\ge1$.
\end{lemma}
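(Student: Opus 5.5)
The plan is a direct injective construction: from each ordered pair $(\sigma,\tau)$ of $3$AP-free permutations of $[k]$, together with one binary choice, I will build a $3$AP-free permutation of $[2k]$. These images will all be distinct, which gives $\theta(2k)\ge 2\theta(k)^2$.

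\emph{The construction.} Given $3$AP-free permutations $\sigma=(\sigma_1,\dots,\sigma_k)$ and $\tau=(\tau_1,\dots,\tau_k)$ of $[k]$, define two permutations of $[2k]$:
\begin{equation*}
E(\sigma,\tau)=(2\sigma_1,\dots,2\sigma_k,\,2\tau_1-1,\dots,2\tau_k-1),
\end{equation*}
\begin{equation*}
O(\sigma,\tau)=(2\tau_1-1,\dots,2\tau_k-1,\,2\sigma_1,\dots,2\sigma_k).
\end{equation*}
In $E(\sigma,\tau)$ all the evens come first, followed by all the odds; in $O(\sigma,\tau)$ the order of the two blocks is reversed. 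These really are permutations of $[2k]$, since $x\mapsto 2x$ maps $[k]$ onto the evens of $[2k]$ and $x\mapsto 2x-1$ maps $[k]$ onto the odds.

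\emph{Checking $3$AP-freeness.} Suppose entries $a,b,c$ appear in positions $i<j<k'$ with $a+c=2b$. Then $a$ and $c$ have the same parity, and I split into cases.
\begin{itemize}
\item If $a$ and $c$ are both even, they both lie in the even block. That block is contiguous, so the entry $b$, which sits at a position between them, also lies in the even block. Dividing by $2$ then gives a $3$AP $a/2,\,b/2,\,c/2$ in $\sigma$ at the same relative positions, which is a contradiction.
\item If $a$ and $c$ are both odd, the same argument applies to the odd block. Applying the map $x\mapsto (x+1)/2$, which preserves arithmetic progressions, yields a $3$AP in $\tau$, again a contradiction.
\end{itemize}
So both $E(\sigma,\tau)$ and $O(\sigma,\tau)$ are $3$AP-free.

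\emph{Counting.} The maps $(\sigma,\tau)\mapsto E(\sigma,\tau)$ and $(\sigma,\tau)\mapsto O(\sigma,\tau)$ are each injective, because $\sigma$ and $\tau$ can be read off from the two blocks. Their images are disjoint whenever $k\ge1$, since the first entry of $E(\sigma,\tau)$ is even while the first entry of $O(\sigma,\tau)$ is odd. Hence $\theta(2k)\ge 2\theta(k)^2$.

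The only step needing care is the observation that the middle term of a progression must lie in the same block as its endpoints. This uses two facts together: the endpoints share a parity, and each parity class occupies a contiguous block of positions. I do not expect any genuine obstacle.
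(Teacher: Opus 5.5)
Your proposal is correct and uses the same construction as the paper: concatenate the block $(2\sigma_i)$ and the block $(2\tau_i-1)$, in either order. You also supply the parity/contiguity argument for $3$AP-freeness and the injectivity and disjointness count, both of which the paper's one-line proof leaves implicit.
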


\begin{proof}
Given two $3$AP-free permutations $(a_1,\dots, a_k)$ and 
$(b_1,\dots,b_k)$ of $[k]$,
both $$(2a_1,\dots,2a_k,2b_1-1,\dots,2b_k-1)
\;\;\text{and}\;\; 
(2b_1-1, \dots, 2b_k-1, 2a_1, \dots, 2a_k)$$
are $3$AP-free permutations of $(1,\dots,2k)$.
\end{proof}

\begin{lemma}[\cite{Sha09}, Theorem 2.8] \label{lemma:2}
For $n\ge3$, we have $\theta(n)\le21\theta(\lceil n/2\rceil)\theta(\lfloor n/2\rfloor)$. 
In particular, for $k\ge1$, we have $\theta(2k)\le21\theta(k)^2$.
\end{lemma}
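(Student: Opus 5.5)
The plan is to prove the bound $\theta(n)\le21\,\theta(\lceil n/2\rceil)\,\theta(\lfloor n/2\rfloor)$ through the parity decomposition that underlies the construction in Lemma~1, run in reverse. Let $\pi$ be a $3$AP-free permutation of $[n]$ with $n\ge3$. Its odd entries, read in order and mapped by $x\mapsto(x+1)/2$, form a permutation $\alpha$ of $[\lceil n/2\rceil]$. Its even entries, mapped by $x\mapsto x/2$, form a permutation $\beta$ of $[\lfloor n/2\rfloor]$. Both are $3$AP-free, because the maps are affine and a $3$AP among the images pulls back to a $3$AP in $\pi$. The third piece of data is the \emph{parity word} $w\in\{\mathrm O,\mathrm E\}^n$, which records the parity of $\pi_i$ at each position. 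The triple $(\alpha,\beta,w)$ determines $\pi$. So it suffices to show that, for any fixed $\alpha,\beta$, at most $21$ parity words $w$ yield a $3$AP-free $\pi$.

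The structural heart is the claim that $w$ is very nearly a single block of one parity followed by a single block of the other; concretely, $w$ should have only a bounded number of alternations between $\mathrm O$ and $\mathrm E$. The tool is the midpoint test. Two entries $a,c$ of equal parity at positions $i<k$ have an integer midpoint $(a+c)/2$, and this value must not occur at any position strictly between $i$ and $k$. I would use this against the extreme values. First, the entries $1$ and $2$ (or $n-1$ and $n$) together with their partners in same-parity pairs force many midpoints to fall in prescribed positional intervals. Second, I would argue that if an odd entry sits strictly between two evens (or vice versa) too often, then the midpoint of some suitable same-parity pair, for instance the pair of smallest and largest values in the class that is split around it, is trapped between them.

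Iterating this should show that, apart from a bounded number of exceptional entries near the boundary between the two blocks, every entry of one parity precedes every entry of the other. It should also show that the exceptional entries can only occupy a constant number of configurations.

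Once this is in hand, the last step is to count. There are two choices for which parity comes first, and I would then enumerate the few admissible ways the exceptional entries can be interleaved near the junction; the target is a total of at most $21$. The specialisation $\theta(2k)\le21\theta(k)^2$ is then immediate from $\lceil 2k/2\rceil=\lfloor 2k/2\rfloor=k$, and the small cases $n\ge3$ can be checked directly if the general argument needs $n$ moderately large.

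I expect the main obstacle to be the structural claim, and specifically getting an \emph{absolute} bound on the mixing, independent of $n$, rather than a bound that grows with $n$. My first attempt would be to pick the pair of same-parity entries with extreme values (the values $1$ and the largest odd, say), locate all opposite-parity entries lying between them, and show that each such entry creates a forbidden midpoint unless it is one of $O(1)$ specific values. The exact constant $21$ would then come from carefully enumerating these residual cases.
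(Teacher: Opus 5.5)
The paper does not prove this lemma. It is quoted from Sharma's Theorem 2.8, and the Lean formalization is explicitly conditional on it.

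Your reduction is correct. The map $\pi\mapsto(\alpha,\beta,w)$ is injective, $\alpha$ and $\beta$ are $3$AP-free, and it suffices to show that for fixed $\alpha,\beta$ at most $21$ parity words $w$ are admissible; this is the converse of the construction in Lemma~1. But the reduction is the easy part. The rigidity of $w$ and the count $21$ are the whole content of Sharma's theorem, and your proposal supplies neither. Phrases like ``should show'', ``the target is at most $21$'' and ``carefully enumerating these residual cases'' describe what must be proved, not an argument. So there is a genuine gap at exactly the step that carries the lemma.

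The tool you propose for that step is also too weak as stated. An entry $e$ lying between two same-parity entries $a,c$ is excluded only when $a+c=2e$. So the extreme pair $(1,M)$ rules out only the single value $(1+M)/2$ and says nothing about any other even entry between them. For such an even $e\le(n+1)/2$, the pair through $1$ only forces $2e-1$ to lie on the same side of $e$ as $1$. This propagates constraints rather than excluding $e$, so counting ``trapped midpoints'' gives no $n$-independent bound.

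With $\alpha,\beta$ fixed, the exact condition on $w$ is this: for every even $e$, the set of odd values preceding $e$ is symmetric about $e$ inside $W_e=[\max(1,2e-n),\min(n,2e-1)]$, and likewise for the evens preceding each odd $o$. A bound independent of $n$ has to come from playing these symmetry conditions off against each other across many centres; values near $n/2$ have windows covering almost all of $[n]$. That argument must also pin the run lengths of $w$ down to $O(1)$ choices, not merely bound the number of alternations. You acknowledge this when you speak of exceptions near the junction in a constant number of configurations, but you do not prove it. For example, with $K=\lceil n/2\rceil$ odds and $J=\lfloor n/2\rfloor$ evens, the words $\mathrm{O}^a\,\mathrm{E}\,\mathrm{O}^{K-a}\,\mathrm{E}^{J-1}$ all have three alternations, yet there are $K-1$ of them. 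Only once such a rigidity lemma is established does the junction case analysis that is supposed to yield $21$ make sense.
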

These lemmas imply that
\begin{equation}
2\theta(k)^2\le\theta(2k)\le21\theta(k)^2
\quad\text{for $k\ge1$,}
\end{equation}
so taking $(2k)$-th roots gives
\begin{equation}
2^{1/(2k)}\theta(k)^{1/k}
\le\theta(2k)^{1/(2k)}
\le21^{1/(2k)}\theta(k)^{1/k}
\quad\text{for $k\ge1$.}
\end{equation}
Setting $k=m2^t$ with $m\ge1$ and $t\ge0$ thus gives
\begin{equation} \label{eq:star}
2^{1/(m2^{t+1})}b_t(m)\le b_{t+1}(m)\le21^{1/(m2^{t+1})}b_t(m).
\end{equation}
In particular, since $2^{1/(m2^{t+1})}>1$, the sequence $(b_t(m))_{t\ge0}$
is strictly increasing. 
Repeatedly applying the bound on the right in \eqref{eq:star} yields
\begin{equation}
b_t(m)\le b_0(m)\cdot21^{\sum_{j=1}^t{1\over m2^j}}\le b_0(m)\cdot21^{1/m},
\end{equation}
so $(b_t(m))_{t\ge0}$ is bounded above. Thus $(b_t(m))_{t\ge0}$
converges to some limit $L_m$. 

Now let us bound $L_m$. Applying \eqref{eq:star} with $t+r$ in place of $t$, we get
\begin{equation}
2^{1/(m2^{t+r+1})}b_{t+r}(m)\le b_{t+r+1}(m)\le21^{1/(m2^{t+r+1})}b_{t+r}(m).
\end{equation}
Iterating the left inequality $k$ times gives
\begin{equation}
b_{t+k}(m)\ge b_t(m)\prod_{r=1}^k2^{1/(m2^{t+r})}
\ge 2^{(1-2^{-k})/(m2^t)}b_t(m).
\end{equation}
Similarly, iterating the right inequality gives
\begin{equation*}
b_{t+k}\le 21^{(1-2^{-k})/(m2^t)}b_t(m).
\end{equation*}
Sending $k\to\infty$, we find that
\begin{equation} \label{eq:dagger} 
\bigl(2\theta(m2^t)\bigr)^{1/(m2^t)}
\le L_m
\le \bigl(21\theta(m2^t)\bigr)^{1/(m2^t)}
\end{equation}
for all $m\ge1$ and $t\ge0$.

To complete the proof, we show that $L_1\ne L_{75}$.
Indeed, since $64=1\cdot2^6$ and $75=75\cdot2^0$, \eqref{eq:dagger} implies that
$L_1\ge(2\theta(64))^{1/64}$ and $L_{75}\le(21\theta(75))^{1/75}$.
Since $\theta(64)=39911512393313043466768$ and $\theta(75)=30235147387260979648843264$
(see \cite{CH17} or OEIS \href{https://oeis.org/A003407}{A003407} for $\theta(n)$ values),
we have $L_1\ge(2\theta(64))^{1/64}\approx2.27953231299$
and $L_{75}\le(21\theta(75))^{1/75}\approx2.27703523933$.
Thus $L_1\ne L_{75}$, and we are done. 

\section{Discussion}
As a consequence of this result, $3$AP-free permutations provide an 
interesting and rare example of a combinatorial object that appears
(empirically) to be increasing (that is, writing $a_n$ for the number of objects of
size $n$, data suggests that $a_n\le a_{n+1}$ for all $n$), 
with reasonably close bounds $c^n\le a_n\le C^n$ for some constants $1<c<C$,
where the growth rate $\lim_{n\to\infty}a_n^{1/n}$ does not exist.

Why does this happen?
A common way to prove that such limits exist, though not the only way, is as follows:
Many combinatorial objects are such that you can combine smaller objects
to form bigger ones; this often leads to bounds such as $a_ma_n\le a_{m+n}$.
We can then apply Fekete's lemma to show convergence. However, while we can
combine two $3$AP-free permutations of order $n$ to get a $3$AP-free permutation
of order $2n$ (thus implying the convergence of $\theta(2^n)^{1/2^n}$),
there does not appear to be any simple way to combine $3$AP-free 
permutations of different sizes to obtain a new $3$AP-free permutation.

Using the ideas of this paper, we have the following separation,
which is the largest gap we can prove from the data we have
(namely $\theta(n)$ values up to $n=200$):
We have $L_1\ge(2\theta(128))^{1/128}\approx2.28484$ and
$L_{81}\le(21\theta(162))^{1/162}\approx2.23760$, so
\begin{align*}
2.20
<\liminf_{n\to\infty}\theta(n)^{1/n}
<2.23760
<2.28484
<\limsup_{n\to\infty}\theta(n)^{1/n}
<2.33.
\end{align*} 

\section*{Acknowledgements}
While I wrote this paper myself and checked it carefully,
and while I take full responsibility for its correctness, 
key ideas of the proof were generated with the assistance of GPT-5.2 pro.  
I also thank Way Yan Win for helpful comments.

\hyperref[theorem:1]{Theorem 1} and its proof have been formalized in Lean 4, contingent on the validity
of Sharma's bound (\hyperref[lemma:2]{Lemma 2}) and the exact values of $\theta(64)$ and $\theta(75)$;
for details, as well as code for computing $\theta(n)$, see \url{https://github.com/boonsuan/3ap-free}.


\begin{thebibliography}{10}

\bibitem[EJ73]{EJ73}
Robert C. Entringer and Douglas E. Jackson, Elementary problem E2440, 
\textsl{American Mathematical Monthly} \textbf{80}
(1973), 1058

\bibitem[Sim75]{Sim75}
Gustavus J. Simmons, Solution to E2440, 
\textsl{American Mathematical Monthly} \textbf{82} (1975), 76

\bibitem[DEGS77]{DEGS77}
James A. Davis, Robert C. Entringer, Ronald L. Graham, and Gustavus J. Simmons,
``On permutations containing no long arithmetic progressions,''
\textsl{Acta Arithmetica} \textbf{34} (1977), 81--90

\bibitem[LR04]{LR04}
Bruce M. Landman and Aaron Robertson, \emph{Ramsey Theory on the Integers}, 
American Mathematical Society (2004), 196 (see Research Problem 7.12)

\bibitem[Sha09]{Sha09}
Arun Sharma, ``Enumerating permutations that avoid three term arithmetic progressions,'' 
\textsl{The Electronic Journal of Combinatorics} \textbf{16} (2009), \#R63 (15 pages)

\bibitem[LV11]{LV11}
Timothy D. LeSaulnier and Sujith Vijay, ``On permutations avoiding arithmetic progressions,'' 
\textsl{Discrete Mathematics} \textbf{311} (2011), 205--207

\bibitem[CH17]{CH17}
Bill Correll, Jr.~and Randy W. Ho, ``A Note on $3$-Free Permutations,''
\textsl{Integers} \textbf{17} (2017), \#A55 (9 pages)
\end{thebibliography}
\end{document}